\documentclass[11pt,reqno]{amsart}

\usepackage{url}
\usepackage{amssymb}
\usepackage{amscd}
\usepackage{amsfonts}
\usepackage{amsmath}
\usepackage{eepic}
\usepackage{gastex}
\usepackage{amsthm}
\usepackage{amsgen}
\usepackage{eucal}
\usepackage[square]{natbib}

\newcommand{\sgp}{semi\-group}
\newcommand{\sgps}{semi\-groups}

\DeclareMathOperator{\al}{alph}
\DeclareMathOperator{\lop}{lop}

\theoremstyle{plain}
\newtheorem{theorem}{Theorem}
\newtheorem{lemma}[theorem]{Lemma}
\newtheorem{proposition}[theorem]{Proposition}

\theoremstyle{remark}

\mathsurround .2pt

\righthyphenmin=3

\title[Identities of the Kauffman Monoid $\mathcal{K}_3$]{Identities of the Kauffman Monoid $\mathcal{K}_3$}

\author{Yuzhu Chen, Xun Hu, N. V. Kitov, Yanfeng Luo, M. V. Volkov}
\address{(Yuzhu Chen, Xun Hu, Yanfeng Luo) Department of Mathematics and Statistics, Lanzhou University,
Lanzhou, Gansu, 730000, China} \email{luoyf@lzu.edu.cn}

\address{(Xun Hu) Department of Mathematics and Statistics, Chongqing Technology and Business University,
Chongqing, 400033, China}

\address{(N. V. Kitov, M. V. Volkov) Institute of Natural Sciences and Mathematics, Ural Federal University,
Lenina 51, 620000 Ekaterinburg, Russia} \email{n.v.kitov@urfu.ru, m.v.volkov@urfu.ru}

\begin{document}

\begin{abstract}
We give a transparent combinatorial characterization of the identities satisfied by the Kauffman monoid $\mathcal{K}_3$. Our characterization leads to a polynomial time algorithm to check whether a given identity holds in $\mathcal{K}_3$.
\end{abstract}

\maketitle

\section*{Introduction}

The present paper is a follow-up of the article by \citet{ACHLV15}. In particular, the object we deal with here (the Kauffman monoid $\mathcal{K}_3$) belongs to the family of monoids studied in that article. We reproduce here the definition of this family, closely following \citep{ACHLV15}.

\citet{TL71}, motivated by some graph-theoretical problems in
statistical mechanics, introduced a family of associative linear
algebras with 1 over the field $\mathbb{C}$. Given an integer
$n\ge 2$ and a scalar $\delta\in\mathbb{C}$, the Temperley--Lieb algebra
$\mathcal{TL}_n(\delta)$ has generators $h_1,\dots,h_{n-1}$ and relations
\begin{align}
&h_{i}h_{j}=h_{j}h_{i}    &&\text{if } |i-j|\ge 2,\ i,j=1,\dots,n-1;\label{eq:TL1}\\
&h_{i}h_{j}h_{i}=h_{i}    &&\text{if } |i-j|=1,\ i,j=1,\dots,n-1;\label{eq:TL2}\\
&h_{i}h_{i}=\delta h_{i}  &&\text{for each } i=1,\dots,n-1.\label{eq:TL3}
\end{align}
Since the relations \eqref{eq:TL1}--\eqref{eq:TL3} do not involve addition, the algebra $\mathcal{TL}_n(\delta)$ is spanned by its multiplicative submonoid generated by $h_1,\dots,h_{n-1}$.
This suggests introducing the monoid $\mathcal{K}_n$ with $n$ generators $c,h_1,\dots,h_{n-1}$ subject to the relations \eqref{eq:TL1}, \eqref{eq:TL2}, and the
relations
\begin{align}
&h_{i}h_{i}=ch_{i}=h_{i}c   &&\text{for each } i=1,\dots,n-1,\label{eq:TL4}
\end{align}
which both mimic \eqref{eq:TL3} and mean that $c$ behaves like the scalar $\delta$. The monoids $\mathcal{K}_n$ are called the \emph{Kauffman monoids}\footnote{The name comes from \citep{BDP02}; in the literature one also meets the name \emph{Temperley--Lieb--Kauffman monoids} \citep[see, e.g.,][]{BL05}. Kauffman himself used the term \emph{connection monoids}.} after \citet{Ka90} who independently invented these monoids as geometrical objects; see \citep[Section~1]{ACHLV15} for a geometric definition of the monoids $\mathcal{K}_n$.

Kauffman monoids play a role in knot theory, low-dimensional topology, topological quantum field theory, quantum groups, etc. As algebraic objects, these monoids belong to the family of so-called diagram or Brauer-type monoids that originally arose in representation theory \citep{Br37}. Various diagram monoids, including Kauffman ones, have gained much attention among semigroup theorists over the last two decades; see, e.g., \citep{Au12,Au14,ADV12,ACHLV15,DE17,DE18,Dea15,DEG17,Ea11a,Ea11b,Ea14a,Ea14b,EF12,EG17,Eea18,FL11,KMM06,KM06,KM07,LF06,MM07,Ma98,Ma02}.

In particular, the finite basis problem for the identities satisfied by Kauffman monoids has been solved by \citet{ACHLV15} who proved that, for each $n\ge 3$, the identities holding in the monoid $\mathcal{K}_n$ are not finitely based. The proof was based on a very `high-level' sufficient condition for the absence of a finite identity basis; if a semigroup $\mathcal{S}$ satisfies this condition, one can conclude that $\mathcal{S}$ admits no finite identity basis, without writing down any concrete identity holding in $\mathcal{S}$! Thus, no information about the identities of $\mathcal{K}_n$ for $n\ge 3$ can be extracted from the proofs in \citep{ACHLV15}, besides, of course, the mere fact that non-trivial identities in $\mathcal{K}_n$ do exist (since they have no finite basis).

As mentioned in  \citep{ACHLV15}, an alternative approach for the finite basis problem for $\mathcal{K}_3$ was independently developed by three of the authors of the present paper (Chen, Hu, and Luo). That approach relied on purely syntactic techniques and required, as an intermediate step, a combinatorial characterization of the identities satisfied by $\mathcal{K}_3$. Even though the characterization appeared to be of independent interest, it remained unpublished for two reasons: first, its initial, calculation-based proof was rather bulky; second, its main application, that is, the absence of a finite identity basis for $\mathcal{K}_3$, was subsumed by a much more general result in \citep{ACHLV15}. Now, with the inclusion of Kitov and Volkov in the team, we have mastered a short, calculation-free proof of the characterization and, besides that, we have found a new application: namely, we have shown that the characterization leads to a polynomial time algorithm to check whether a given identity holds in $\mathcal{K}_3$. The short proof and the new application make the content of the present paper.

The characterization of the identities of $\mathcal{K}_3$ and its algorithmic version are presented in Sections~\ref{sec:wire} and~\ref{sec:nfb} respectively.

\section{Identities of $\mathcal{K}_3$}
\label{sec:wire}

Recall that for a semigroup $\mathcal{S}$, the notation $\mathcal{S}^1$ stands for the least monoid containing $\mathcal{S}$, that is\footnote{Here and throughout expressions like $A:=B$ emphasize that $A$ is defined to be $B$.}, $\mathcal{S}^1:=\mathcal{S}$ if $\mathcal{S}$ has an identity element and $\mathcal{S}^1:=\mathcal{S}\cup\{1\}$ if $\mathcal{S}$ has no identity element; in the latter case the multiplication in $\mathcal{S}$ is extended to $\mathcal{S}^1$ in a unique way such that the fresh symbol $1$ becomes the identity element in $\mathcal{S}^1$. We adopt the following notational convention: if $s$ is an element of a semigroup $\mathcal{S}$, then $s^0$ stands for the identity element of $\mathcal{S}^1$.

We fix a countably infinite set $X$ which we refer to as an alphabet; elements of $X$ are referred to as \emph{letters}. The set $X^+$ of finite sequences of letters forms a semigroup under concatenation which is called  the \emph{free semigroup over the alphabet $X$}. Elements of $X^+$ are called \emph{words over $X$}. The monoid $X^*:=(X^+)^1$ is called the \emph{free monoid} over $X$; its identity element is referred to as the \emph{empty word}. We will often use the well-known universal property of the free monoid: if $\mathcal{M}$ is a monoid, any map $X\to\mathcal{M}$ can be uniquely extended to a homomorphism $X^*\to\mathcal{M}$ sending the empty word to the identity element of $\mathcal{M}$.

If $w=a_1\cdots a_\ell$ with $a_1,\dots,a_\ell\in X$ is a word from $X^+$, the number $\ell$ is called the \emph{length} of $w$, and $a_1$ and $a_\ell$ are said to be the \emph{first letter} and, respectively, the \emph{last letter} of $w$. The length of the empty word is 0, while the first and the last letter of the empty word are undefined.

We say that a word $v\in X^+$ \emph{occurs} in a word $w\in X^+$ if $w$ can be factorized as $w=u_1vu_2$ for some words $u_1,u_2\in X^*$. In this situation, the words $u_1$ and $u_2$ are referred to as the \emph{left context} and, respectively, the \emph{right context} of the occurrence of $v$. Clearly, it may happen that $v$ has several occurrences in $w$; we order these occurrences according to the lengths of their left contexts so that the \emph{first occurrence} is the one with the shortest left context, and so on.

For a word $w\in X^*$, we denote by $\al(w)$ the \emph{content} of $w$, that is, the set of all letters that occur in $w$. Observe that $w$ is empty if and only if $\al(w)$ is the empty set. If $Y\subseteq X$, we denote by $w_Y$ the word obtained from $w$ by removing all occurrences of the letters in $Y$. Then $w_Y$ is empty if and only if $\al(w)\subseteq Y$.

An \emph{identity} is an expression of the form $u\bumpeq v$ with $u,v\in X^*$. If $\mathcal{M}$ is a monoid, we say that the identity $u\bumpeq v$ \emph{holds} in $\mathcal{M}$ or, alternatively, that $\mathcal{M}$ \emph{satisfies} the identity $u\bumpeq v$ if every homomorphism $\varphi\colon X^*\to \mathcal{M}$ \emph{equalizes} $u$ and $v$, that is, $u\varphi=v\varphi$. Similarly, if $u,v\in X^+$ and $\mathcal{S}$ is a semigroup, we say that the identity $u\bumpeq v$ \emph{holds} in $\mathcal{S}$ or that $\mathcal{S}$ \emph{satisfies} the identity $u\bumpeq v$ if every homomorphism from $X^+$ into $\mathcal{S}$ equalizes $u$ and $v$.

The following fact is a part of semigroup folklore but we include its proof for the sake of completeness.

\begin{lemma}
\label{lem:monoid}
If $u,v\in X^*$ and the identity $u\bumpeq v$ holds in a monoid $\mathcal{M}$, then so does the identity $u_Y\bumpeq v_Y$ for each $Y\subseteq X$.
\end{lemma}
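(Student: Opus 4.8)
The plan is to reduce the general statement to the case where $Y$ is a single letter, and then handle that case by constructing a suitable homomorphism. First I would observe that for $Y = \{y_1, \dots, y_k\}$ one has $w_Y = \big((w_{\{y_1\}})_{\{y_2\}}\big)_{\cdots\{y_k\}}$, so a straightforward induction on $|Y|$ reduces everything to showing: if $u \bumpeq v$ holds in $\mathcal{M}$, then $u_{\{y\}} \bumpeq v_{\{y\}}$ holds in $\mathcal{M}$ for every single letter $y \in X$. (The case $Y = \varnothing$ is trivial since $w_\varnothing = w$, and if $Y$ is infinite one can still proceed letterwise, as only the finitely many letters of $Y$ that actually occur in $u$ or $v$ matter.)

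For the single-letter case, fix $y \in X$ and let $\varphi \colon X^* \to \mathcal{M}$ be an arbitrary homomorphism; I must show $u_{\{y\}}\varphi = v_{\{y\}}\varphi$. The key idea is to cook up a second homomorphism $\psi \colon X^* \to \mathcal{M}$ that behaves like $\varphi$ on all letters except $y$, which it instead sends to the identity element $1$ of $\mathcal{M}$: concretely, using the universal property of the free monoid $X^*$, define $\psi$ by $x\psi := x\varphi$ for $x \in X \setminus \{y\}$ and $y\psi := 1$. Since $u \bumpeq v$ holds in $\mathcal{M}$, the homomorphism $\psi$ equalizes $u$ and $v$, i.e.\ $u\psi = v\psi$.

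It remains to check the bookkeeping identity $w\psi = w_{\{y\}}\varphi$ for every word $w \in X^*$, from which $u_{\{y\}}\varphi = u\psi = v\psi = v_{\{y\}}\varphi$ follows at once. This is an easy induction on the length of $w$: for the empty word both sides are $1$; and if $w = w'x$ with $x \in X$, then either $x = y$, in which case $w\psi = w'\psi \cdot 1 = w'\psi = w'_{\{y\}}\varphi = w_{\{y\}}\varphi$, or $x \neq y$, in which case $w\psi = w'\psi \cdot x\varphi = w'_{\{y\}}\varphi \cdot x\varphi = (w'_{\{y\}}x)\varphi = w_{\{y\}}\varphi$, using in both cases the induction hypothesis and the fact that $\psi$ agrees with $\varphi$ on letters other than $y$.

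The whole argument is routine; the only place demanding the slightest care is the reduction to a single letter, where one must be sure that removing letters one at a time really does compute $w_Y$ and that an infinite $Y$ causes no trouble — but since $w_Y$ depends only on $\al(w) \cap Y$, which is finite, this is immediate. The essential use of the monoid structure is that $1 \in \mathcal{M}$ provides a legitimate image for the deleted letter $y$; this is exactly why the statement is phrased for monoids and words in $X^*$ rather than for semigroups and words in $X^+$.
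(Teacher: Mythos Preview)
Your proof is correct and uses the same key idea as the paper: define a new homomorphism that agrees with $\varphi$ on letters outside $Y$ and sends letters in $Y$ to $1$, then observe that this homomorphism applied to $w$ equals $\varphi$ applied to $w_Y$. The paper does this directly for an arbitrary $Y$ in one stroke (defining $\varphi_Y$ with $x\varphi_Y = 1$ for all $x\in Y$ simultaneously), so your preliminary reduction to the single-letter case via induction on $|Y|$ is unnecessary, though harmless.
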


\begin{proof}
We have to check that an arbitrary homomorphism $\varphi\colon X^*\to\mathcal{M}$ equalizes $u_Y$ and $v_Y$. Consider the homomorphism $\varphi_Y\colon X^*\to\mathcal{M}$ that extends the following map $X\to\mathcal{M}$:
\[
x\mapsto\begin{cases}
            x\varphi & \text{if } x\notin Y,\\
            1 & \text{if } x\in Y.
        \end{cases}
\]
Then $w\varphi_Y=w_Y\varphi$ for every $w\in X^*$, whence $u_Y\varphi=u\varphi_Y=v\varphi_Y=v_Y\varphi$ since $\varphi_Y$ equalizes $u$ and $v$.
\end{proof}

We also need a normal form for the elements of the Kauffman monoid $\mathcal{K}_n$; this form was suggested by \citet{Jo83}. By the definition, the elements of $\mathcal{K}_n$ can be represented as words over the alphabet $\{c,h_1,\dots,h_{n-1}\}$. For all $a,b$ such that $1\le a<b\le n-1$, let $h_{[b,a]}:=h_bh_{b-1}\cdots h_{a+1}h_a$; for the sake of uniformity, we also let $h_{[a,a]}:=h_a$. A word from $\{c,h_1,\dots,h_{n-1}\}^*$ is said to be in \emph{Jones's normal form} if it is either of the form $c^{\ell}h_{[b_1, a_1]}\cdots h_{[b_k,a_k]}$ for some $\ell\ge0$ and some $a_1 < \dots <a_k$, $b_1 < \dots < b_k$, or of the form $c^{\ell}$ for some $\ell\ge0$. The proofs of the next statement can be found in \citep{BDP02} and \citep{BL05}.

\begin{lemma}
\label{lem:jones}
Every element of the Kauffman monoid $\mathcal{K}_{n}$ has a unique representation as a word in Jones's normal form over $\{c,h_1,\dots,h_{n-1}\}$.
\end{lemma}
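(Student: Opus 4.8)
The statement to be proved is Lemma~\ref{lem:jones}: every element of $\mathcal{K}_n$ has a unique representation in Jones's normal form. The plan splits naturally into an \emph{existence} part (every word over $\{c,h_1,\dots,h_{n-1}\}$ can be rewritten, using the defining relations, into a word in normal form) and a \emph{uniqueness} part (no two distinct normal-form words represent the same element of $\mathcal{K}_n$).

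For existence, I would argue by induction on the length of a word $w\in\{c,h_1,\dots,h_{n-1}\}^*$. The relations \eqref{eq:TL4} let one pull every occurrence of $c$ to the front and collect them as a power $c^\ell$, so it suffices to normalize a $c$-free word, i.e.\ a product of blocks $h_{[b_1,a_1]}\cdots h_{[b_k,a_k]}$. The induction step appends one more generator $h_i$ on the right and shows how to re-sort the resulting expression: using \eqref{eq:TL1} to commute $h_i$ leftward past blocks it is far from, \eqref{eq:TL2} to absorb or split a block when $h_i$ meets an adjacent index, and \eqref{eq:TL4} to create a factor $c$ (hence reducing to a shorter $c$-free word times $c$) when $h_i$ meets an equal index. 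The bookkeeping is to check that after each such move the index sequences $a_1<\dots<a_k$ and $b_1<\dots<b_k$ remain strictly increasing, or the word collapses to a strictly shorter one to which induction applies. This is essentially the classical computation of Jones and is routine but tedious; since the excerpt explicitly cites \citep{BDP02} and \citep{BL05} for the proof, I would simply invoke those references for the detailed rewriting and not reproduce it.

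For uniqueness, the cleanest route is a counting/representation argument rather than a confluence argument. One knows (again from \citep{BDP02,BL05}, or from the diagram interpretation) that the number of distinct words in Jones's normal form of a given ``degree'' matches the known size of the corresponding graded piece of $\mathcal{K}_n$ — the relevant Catalan-type numbers counting planar diagrams. Concretely: the normal forms are in bijection with the planar (Temperley--Lieb--Kauffman) diagrams on $n+n$ points together with a nonnegative number of closed loops, and the diagram monoid realizes $\mathcal{K}_n$ faithfully; since the defining relations hold in the diagram monoid, the surjection from normal forms onto $\mathcal{K}_n$ factors through the diagram monoid, and a cardinality count forces it to be a bijection. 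Thus distinct normal forms give distinct diagrams, hence distinct elements.

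The main obstacle is the uniqueness part: establishing that the map from normal-form words to $\mathcal{K}_n$ is injective genuinely requires an independent model of $\mathcal{K}_n$ (the planar diagram monoid) in which one can see that the relations \eqref{eq:TL1}, \eqref{eq:TL2}, \eqref{eq:TL4} are \emph{exactly} the relations holding among the $h_i$ and $c$ — purely syntactic manipulation cannot rule out unexpected collapses without such a model. Existence, by contrast, is a finite-rewriting bookkeeping exercise. Since both halves are carried out in the cited literature, in the paper I would present Lemma~\ref{lem:jones} with a one-paragraph sketch of the rewriting for existence and a pointer to the diagram model for uniqueness, referring the reader to \citep{BDP02} and \citep{BL05} for full details.
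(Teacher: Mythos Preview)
Your proposal is correct and aligns with the paper's treatment: the paper gives no proof of Lemma~\ref{lem:jones} at all, simply stating that ``The proofs of the next statement can be found in \citep{BDP02} and \citep{BL05}.'' Your sketch of the existence-by-rewriting and uniqueness-via-diagram-model arguments is accurate and in fact goes beyond what the paper itself provides, but since it ultimately defers to the same references, it is essentially the same approach.
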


\begin{theorem}
\label{thm:description}
An identity $w\bumpeq w'$ holds in the Kauffman monoid $\mathcal{K}_3$ if and only if $\al(w)=\al(w')$ and, for each $Y\subset\al(w)$, the words $u:=w_Y$ and $u':=w'_Y$ satisfy the following three conditions:
\begin{itemize}
\item[(a)] the first letter of $u$ is the same as the first letter of $u'$;
\item[(b)] the last letter of $u$ is the same as the last letter of $u'$;
\item[(c)] for each word of length $2$, the number of its occurrences in $u$ is the same as the number of its occurrences in $u'$.
\end{itemize}
\end{theorem}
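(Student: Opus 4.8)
The plan is to use the explicit description of $\mathcal{K}_3$ given by Lemma~\ref{lem:jones}: each element of $\mathcal{K}_3$ is uniquely of the form $c^{\ell}\epsilon$ with $\ell\ge 0$ and $\epsilon$ ranging over $T:=\{1,h_1,h_2,h_1h_2,h_2h_1\}$. Put $\gamma(c^{\ell}\epsilon):=\ell$ and $\mathrm d(c^{\ell}\epsilon):=\epsilon$. Collapsing $c$ to $1$ turns $T$ into a monoid for which $\mathrm d\colon\mathcal{K}_3\to T$ is a homomorphism, and a finite computation shows that the four non-identity elements of $T$ form a $2\times2$ rectangular band (the $\mathcal R$-classes being $\{h_1,h_1h_2\}$, $\{h_2,h_2h_1\}$ and the $\mathcal L$-classes $\{h_1,h_2h_1\}$, $\{h_2,h_1h_2\}$), so a product of non-identity elements of $T$ is determined by the $\mathcal R$-class of its first factor and the $\mathcal L$-class of its last factor. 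For $\epsilon\in T\setminus\{1\}$ write $i(\epsilon)$ and $j(\epsilon)$ for the indices of the leftmost and rightmost generators of $\epsilon$; then $\epsilon\mapsto(i(\epsilon),j(\epsilon))$ is a bijection onto $\{1,2\}^2$, with the $\mathcal R$-class recording $i$ and the $\mathcal L$-class recording $j$. Another finite check gives $\gamma(st)=\gamma(s)+\gamma(t)+\beta(\mathrm d(s),\mathrm d(t))$, where $\beta(\epsilon,\epsilon')=1$ if $\epsilon,\epsilon'\ne1$ and $j(\epsilon)=i(\epsilon')$, and $\beta(\epsilon,\epsilon')=0$ otherwise. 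Iterating these two identities, for any homomorphism $\varphi\colon X^{*}\to\mathcal{K}_3$ with $x\varphi=c^{g_x}\epsilon_x$, any word $w=x_1\cdots x_n$, and $Y_0:=\{x\in\al(w):\epsilon_x=1\}$, one obtains $\mathrm d(w\varphi)=(w_{Y_0})\psi$ with $\psi\colon x\mapsto\epsilon_x$, and $\gamma(w\varphi)=\sum_{x\in\al(w)}|w|_x\,g_x+\sum_{(a,b)}\#_{ab}(w_{Y_0})\,[\,j(\epsilon_a)=i(\epsilon_b)\,]$, where the second sum ranges over ordered pairs $(a,b)$ of letters occurring in $w_{Y_0}$, $\#_{ab}(\cdot)$ counts occurrences of the two-letter word $ab$, and $|\cdot|_x$ counts occurrences of $x$.

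\textbf{Necessity.} Suppose $w\bumpeq w'$ holds in $\mathcal{K}_3$. Sending a fixed letter to $h_1$ and every other letter to $1$ produces $c^{k-1}h_1$ if that letter occurs $k\ge1$ times and $1$ otherwise, which forces $\al(w)=\al(w')$. Fix a proper subset $Y\subset\al(w)$; by Lemma~\ref{lem:monoid}, $u:=w_Y$ and $u':=w'_Y$ satisfy $u\bumpeq u'$ in $\mathcal{K}_3$ and have a common non-empty content $A$. If the first letters of $u$ and $u'$ were different, sending that first letter of $u$ to $h_1$ and all other letters to $h_2$ would put $\mathrm d(u\varphi)$ in the $\mathcal R$-class of $h_1$ and $\mathrm d(u'\varphi)$ in that of $h_2$, so $\mathrm d(u\varphi)\ne\mathrm d(u'\varphi)$ and $u\varphi\ne u'\varphi$; this is (a), and (b) is symmetric. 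For (c), let $\varphi$ send each letter of $A$ to a freely chosen element of $T\setminus\{1\}$, so that $(i(\epsilon_x),j(\epsilon_x))$ may be prescribed arbitrarily in $\{1,2\}^2$ for each $x\in A$. Then $\gamma(u\varphi)=\sum_{(a,b)\in A^2}\#_{ab}(u)\,[\,j(\epsilon_a)=i(\epsilon_b)\,]$, so with $Q_{ab}:=\#_{ab}(u)-\#_{ab}(u')$ we have $\sum_{a,b}Q_{ab}\,[\,j(\epsilon_a)=i(\epsilon_b)\,]=0$ for every choice. Since $[\,j(\epsilon_a)=i(\epsilon_b)\,]=1-(j(\epsilon_a)-1)-(i(\epsilon_b)-1)+2(j(\epsilon_a)-1)(i(\epsilon_b)-1)$ and the bits $j(\epsilon_a)-1,\ i(\epsilon_b)-1$ are free, evaluating this relation at the all-zero choice, then at a single $i(\epsilon_{b_0})-1=1$, then at a single $j(\epsilon_{a_0})-1=1$, and finally at $i(\epsilon_{b_0})-1=j(\epsilon_{a_0})-1=1$ successively yields that the total sum of the $Q_{ab}$ vanishes, that all row and column sums vanish, and hence that $2Q_{a_0b_0}=0$, i.e. $Q_{a_0b_0}=0$ for all $a_0,b_0\in A$. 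This is (c).

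\textbf{Sufficiency.} Suppose $\al(w)=\al(w')$ and (a)--(c) hold for every proper $Y\subset\al(w)$. First, (b) and (c) already force $|w_Y|_a=|w'_Y|_a$ for every letter $a$, since the number of occurrences of $a$ in a word equals $\sum_b\#_{ab}$ of that word plus $1$ or $0$ according as the word ends with $a$ or not; in particular $w$ and $w'$ have the same Parikh vector. Now take an arbitrary $\varphi\colon X^{*}\to\mathcal{K}_3$, write $x\varphi=c^{g_x}\epsilon_x$, and set $Y_0:=\{x\in\al(w):\epsilon_x=1\}$. By the formulas above, $\mathrm d(w\varphi)=(w_{Y_0})\psi$ is a product, possibly empty, of non-identity elements of $T$, hence determined by $\epsilon$ applied to the first and last letters of $w_{Y_0}$; by (a) and (b) for $Y_0$ these agree with the first and last letters of $w'_{Y_0}$, so $\mathrm d(w\varphi)=\mathrm d(w'\varphi)$. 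Likewise $\gamma(w\varphi)=\sum_a|w|_a\,g_a+\sum_{(a,b)}\#_{ab}(w_{Y_0})\,[\,j(\epsilon_a)=i(\epsilon_b)\,]$; by the Parikh equality and (c) for $Y_0$ each summand coincides with its analogue for $w'$, so $\gamma(w\varphi)=\gamma(w'\varphi)$. Hence $w\varphi=c^{\gamma(w\varphi)}\mathrm d(w\varphi)=c^{\gamma(w'\varphi)}\mathrm d(w'\varphi)=w'\varphi$, and as $\varphi$ was arbitrary, $w\bumpeq w'$ holds in $\mathcal{K}_3$.

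\textbf{Main obstacle.} The heart of the matter is the necessity of (c): one must first recognise that the $c$-exponent of a product in $\mathcal{K}_3$ is governed by the elementary local rule $\beta$ rather than by some delicate global loop count, and then notice that letting the generators range over the whole four-element rectangular band $T\setminus\{1\}$ — not merely over $\{h_1,h_2\}$, which only delivers the weaker symmetric information $\#_{ab}+\#_{ba}$ — supplies enough independent linear constraints to force every two-letter factor count, diagonal ones included, to be preserved. Everything else — the content condition, (a), (b), and the whole of sufficiency — is bookkeeping with Jones's normal form once $\mathrm d$, $\gamma$ and $\beta$ are in place; the one recurring subtlety is that the letters $\varphi$ sends to powers of $c$ are invisible both to $\mathrm d$ and to the $\beta$-sum, which is precisely why the conditions must be imposed on every projection $w_Y$ rather than on $w$ alone.
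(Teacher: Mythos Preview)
Your proof is correct and is genuinely self-contained, whereas the paper's proof outsources both directions to the literature. For necessity, the paper embeds $\mathcal{S}_2=\langle e,f\mid e^2=e,\ f^2=f\rangle$ into $\mathcal{K}_3$ and then invokes \citep[Theorem~3]{SV17}, which already characterises the identities of $\mathcal{S}_2$ by conditions (a)--(c); for sufficiency, it embeds the ideal $H=\mathcal{K}_3\setminus C$ into the Rees matrix semigroup $\mathrm{M}(\mathbb{Z};\Delta)$ and invokes \citep[Theorem~9]{KR79}. Your decomposition $s=c^{\gamma(s)}\mathrm d(s)$ with the cocycle $\beta$ is exactly the Rees matrix structure written out by hand---your pair $(i(\epsilon),j(\epsilon))$ is the paper's $(\eta,\lambda)$ and your $\beta$ is the sandwich entry $\delta_{\lambda\iota}$---so the two sufficiency arguments are the same computation under different names. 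The real novelty in your write-up is the necessity of~(c): rather than appealing to \citep{SV17}, you let each letter range over the full four-element band $T\setminus\{1\}$ so that the bits $i(\epsilon_x)-1$ and $j(\epsilon_x)-1$ vary independently, and then extract each $Q_{ab}$ by a short inclusion--exclusion on $\{0,1\}$-valued test substitutions. This is a clean elementary replacement for the cited result and, as you note, would fail if one only substituted from $\{h_1,h_2\}$.

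One tiny gap to patch in the sufficiency paragraph: you appeal to ``(a) and (b) for $Y_0$'', but the hypotheses only supply (a)--(c) for \emph{proper} $Y_0\subset\al(w)$. When $Y_0=\al(w)$ the words $w_{Y_0}$ and $w'_{Y_0}$ are both empty, so $\mathrm d(w\varphi)=\mathrm d(w'\varphi)=1$ and the $\beta$-sum is vacuous; the equality $w\varphi=w'\varphi$ then follows from the Parikh equality alone. A one-line case split handles this.
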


\begin{proof}
We start with a closer look at the monoid $\mathcal{K}_3$. Specializing the definition of the Kauffman monoids given in the introduction, one gets the following monoid presentation for $\mathcal{K}_3$:
\[
\mathcal{K}_3=\left\langle h_1,h_2,c\quad \begin{tabular}{|@{\quad}c} $h_1h_2h_1=h_1,\ h_2h_1h_2=h_2,$\\[.1ex] $h_1^2=ch_1=h_1c,\  h_2^2=ch_2=h_2c$\end{tabular}\right\rangle.
\]
Lemma~\ref{lem:jones} readily implies that every element in $\mathcal{K}_3$ is equal to a unique element of one of the following 5 sets:
\begin{align*}
  C &:=\{c^k\mid k=0,1,\dots\},&&\\
  H_{11} &:=\{c^\ell h_1\mid \ell=0,1,\dots\}, &
  H_{12} &:=\{c^m h_1h_2\mid m=0,1,\dots\},\\
  H_{21} &:=\{c^n h_2h_1\mid n=0,1,\dots\}, &
  H_{22} &:=\{c^r h_2\mid r=0,1,\dots\}.
\end{align*}

We turn to the proof of the `only if' part of our theorem. Let $w\bumpeq w'$ be an arbitrary identity that holds in $\mathcal{K}_3$. Given a letter $x_0\in X$, consider the homomorphism $\chi_0\colon X^*\to\mathcal{K}_3$
that extends the following map $X\to\mathcal{M}$:
\[
x\mapsto\begin{cases}
            c & \text{if } x=x_0,\\
            1 & \text{if } x\ne x_0.
         \end{cases}
\]
Then $w\chi_0=c^t$, where $t$ is the number of occurrences of $x_0$ in $w$, and similarly, $w'\chi_0=c^{t'}$, where $t'$ is the number of occurrences of $x_0$ in $w'$. Since $\chi_0$ must equalize $w$ and $w'$, we conclude that $t=t'$; in particular, $x_0$ occurs in $w$ if and only if it occurs in $w'$. Thus, $\al(w)=\al(w')$. If the word $w$ is empty, $\al(w)=\varnothing$ has no proper subsets and nothing remains to prove. Therefore, for the rest of the proof of the `only if' part, we assume that neither $w$ nor $w'$ is empty.

Let $\mathcal{S}_2$ stand for the semigroup presented by $\langle e,f \mid e^2=e,\ f^2=f\rangle$, that is, $\mathcal{S}_2$ is the free product of two trivial semigroups. Clearly, in $\mathcal{S}_2$ each element is uniquely represented as an alternating product of the generators $e$ and $f$. Hence, $\mathcal{S}_2$ is a disjoint union of the following 4 sets:
\begin{align*}
&\{(ef)^\ell\mid \ell=1,2,\dots\},&& \{(fe)^nf\mid n=0,1,\dots\},\\
&\{(ef)^me\mid m=0,1,\dots\},&&\{(fe)^r\mid r=1,2,\dots\}.
\end{align*}
We define  a map $\psi\colon\mathcal{S}_2\to\mathcal{K}_3$ as follows:
\begin{align*}
(ef)^\ell &\mapsto c^{2\ell-1}h_1&&\text{for each $\ell>0$},\\
(ef)^me   &\mapsto c^{2m}h_1h_2  &&\text{for each $m\ge0$},\\
(fe)^nf   &\mapsto c^{2n}h_2h_1  &&\text{for each $n\ge0$},\\
(fe)^r    &\mapsto c^{2r-1}h_2   &&\text{for each $r>0$}.
\end{align*}
Clearly, $\psi$ is 1-1, and a straightforward verification shows that $\psi$ is a homomorphism. Indeed, it suffices to compare Table~\ref{tb:S2}, which shows how typical elements of the semigroup $\mathcal{S}_2$ multiply, and Table~\ref{tb:K3}, which shows how the images of these elements under $\psi$ multiply.
\begin{table}[h]
\caption{Multiplication in $\mathcal{S}_2$}\label{tb:S2}
$\begin{array}{c|cccc}
         & (ef)^{\ell'} & (ef)^{m'}e & (fe)^{n'}f & (fe)^{r'} \\
   \hline
  (ef)^\ell & (ef)^{\ell+\ell'} & (ef)^{\ell+m'}e & (ef)^{\ell+n'} & (ef)^{\ell+r'-1}e\rule{0pt}{14pt} \\
  (ef)^me & (ef)^{m+\ell'} & (ef)^{m+m'}e & (ef)^{m+n'+1} & (ef)^{m+r'}e \rule{0pt}{14pt}\\
  (fe)^nf & (fe)^{n+\ell'}f & (fe)^{n+m'+1} & (fe)^{n+n'}f & (fe)^{n+r'} \rule{0pt}{14pt}\\
  (fe)^r & (fe)^{r+\ell'-1}f & (fe)^{r+m'} & (fe)^{r+n'}f & (fe)^{r+r'}\rule{0pt}{14pt}
\end{array}$
\end{table}

\begin{table}[h]
\caption{Multiplication in $\mathcal{S}_2\psi$}\label{tb:K3}
$\begin{array}{c|cccc}
                 & c^{2\ell'-1}h_1 & c^{2m'}h_1h_2 & c^{2n'}h_2h_1 & c^{2r'-1}h_2 \\
  \hline
  c^{2\ell-1}h_1 & c^{2(\ell+\ell')-1}h_1 & c^{2(\ell+m')}h_1h_2 & c^{2(\ell+n')-1}h_1 & c^{2(\ell+r'-1)}h_1h_2\rule{0pt}{14pt} \\
  c^{2m}h_1h_2 & c^{2(m+\ell')-1}h_1 & c^{2(m+m')}h_1h_2 & c^{2(m+n'+1)-1}h_1 & c^{2(m+r')}h_1h_2\rule{0pt}{14pt} \\
  c^{2n}h_2h_1 & c^{2(n+\ell')}h_2h_1 & c^{2(n+m'+1)-1}h_2 & c^{2(n+n')}h_2h_1 & c^{2(n+r')-1}h_2\rule{0pt}{14pt} \\
  c^{2r-1}h_2 & c^{2(r+\ell'-1)}h_2h_1 & c^{2(r+m')-1}h_2 & c^{2(r+n')}h_2h_1 & c^{2(r+r')-1}h_2\rule{0pt}{14pt}
\end{array}$
\end{table}

Thus, $\mathcal{S}_2$ is isomorphic to a subsemigroup in $\mathcal{K}_3$, whence $\mathcal{S}_2$ satisfies every identity $u\bumpeq v$ with $u,v\in X^+$ that holds in $\mathcal{K}_3$. By Lemma~\ref{lem:monoid}, for each proper subset $Y\subset\al(w)$, the identity $u\bumpeq u'$, where $u:=w_Y\in X^+$ and $u':=w'_Y\in X^+$, holds in $\mathcal{K}_3$. We conclude that $u\bumpeq u'$ holds in $\mathcal{S}_2$ as well, and by \citep[Theorem~3]{SV17}, the words $u$ and $u'$ satisfy conditions (a)--(c). This completes the proof of the `only if' part of the theorem.

For the `if' part, consider any identity $w\bumpeq w'$ satisfying the conditions of our theorem. If $\al(w)=\varnothing$, then the condition $\al(w)=\al(w')$ implies that both $w$ and $w'$ are empty words, and the identity $w\bumpeq w'$ holds in every monoid. Thus, we may assume that $\al(w)\ne\varnothing$. Take an arbitrary letter $x\in\al(w)$ and let $Y:=\al(w)\setminus\{x\}$. Then the words $u:=w_Y$ and $u':=w'_Y$ are certain powers of the letter $x$, namely, $u=x^t$ and $u'=x^{t'}$, where $t$ is the number of occurrences of $x$ in $w$ and $t'$ is the number of occurrences of $x$ in $w'$. Clearly, the word $x^2$ occurs $t-1$ times in the word $x^t$ and $t'-1$ times in the word $x^{t'}$, and since $u$ and $u'$ must satisfy the condition (c), we conclude that $t-1=t'-1$, whence $t=t'$.

We have to check that an arbitrary homomorphism $\varphi\colon X^*\to\mathcal{K}_3$ equalizes $w$ and $w'$. Recall that $\mathcal{K}_3$ is the disjoint union of the set $C$, which is a submonoid in $\mathcal{K}_3$, and the set $H:=\mathcal{K}_3\setminus C=H_{11}\cup H_{12}\cup H_{21}\cup H_{22}$, which is the ideal of $\mathcal{K}_3$ generated by $h_1$ and $h_2$. Let $Y:=\{y\in\al(w)\mid y\varphi\in C\}$. For each $y\in Y$, let $t_y$ stand for the number of occurrences of $y$ in $w$ (which, as shown in the preceding paragraph, is equal to the number of occurrences of $y$ in $w'$), and let $k_y\in\{0,1,\dots\}$ be such that $y\varphi=c^{k_y}$.
We denote the sum $\sum_{y\in Y}t_yk_y$ by $N_Y$. If $Y=\al(w)$, we have $w\varphi=c^{N_Y}=w'\varphi$, and we are done.

Consider the situation where $Y\subset\al(w)$. Using the fact that the generator $c$ commutes with the generators $h_1,h_2$, we can represent $w\varphi$ and $w'\varphi$ as $c^{N_Y}w_Y\varphi$ and $c^{N_Y}w'_Y\varphi$ respectively. Therefore it remains to verify that $w_Y\varphi=w'_Y\varphi$, and for this, it suffices to show that the identity $u\bumpeq u'$ with $u:=w_Y$ and $u':=w'_Y$ holds in the semigroup $H$.

We prove that $H$ satisfies  $u\bumpeq u'$, using the Rees matrix construction (cf. \cite[Chapter~3]{CP61}). Let $\mathbb{Z}$ stand for the additive group of integers and let $\Delta:=\begin{pmatrix}1&0\\0&1\end{pmatrix}$ be the identity $2\times2$-matrix over $\mathbb{Z}$. It is convenient for us to represent the matrix using Kronecker's delta notation so that $\Delta=\begin{pmatrix}\delta_{11}&\delta_{12}\\\delta_{21}&\delta_{22}\end{pmatrix}$. Denote by $\mathrm{M}(\mathbb{Z};\Delta)$ the set of triples
\[
\{(\eta,k,\lambda)\mid \eta,\lambda\in\{1,2\},\ k\in\mathbb{Z}\},
\]
endowed with the multiplication
\[
(\eta,k,\lambda)(\iota,\ell,\mu):=(\eta,k+\delta_{\lambda\,\iota}+\ell,\mu).
\]
The semigroup $\mathrm{M}(\mathbb{Z};\Delta)$ is an instance of the family of the \emph{Rees matrix \sgps} over $\mathbb{Z}$.

Define a map $\xi\colon H\to\mathrm{M}(\mathbb{Z};\Delta)$ as follows:
\begin{align*}
c^\ell h_1&\mapsto (1,\ell,1)&&\text{for each $\ell\ge0$},\\
c^m h_1h_2&\mapsto (1,m,2)&&\text{for each $m\ge0$},\\
c^n h_2h_1&\mapsto (2,n,1)&&\text{for each $n\ge0$},\\
c^r h_2   &\mapsto (2,r,2)&&\text{for each $r\ge0$}.
\end{align*}
Obviously, $\xi$ is 1-1, and one can readily verify that $\xi$ is a homomorphism. Thus, $H$ is isomorphic to a subsemigroup in $\mathrm{M}(\mathbb{Z};\Delta)$.  It is known (see, e.g., \citet[Theorem~9]{KR79}) and easy to verify that every identity $u\bumpeq u'$ with $u$ and $u'$ satisfying (a)--(c) holds in each Rees matrix \sgp\ over an abelian group. Hence, every such identity holds in $\mathrm{M}(\mathbb{Z};\Delta)$, and thus, in $H$. This completes the proof of the `if' part of the theorem.
\end{proof}

\section{Recognizing identities of $\mathcal{K}_3$ in polynomial time}
\label{sec:nfb}

Given a semigroup $\mathcal{S}$, its \emph{identity checking problem}\footnote{Also called the `\emph{term equivalence problem}' in the literature.} is a combinatorial decision problem whose instance is an arbitrary pair $(w,w')$ of words; the answer to the instance $(w,w')$ of the problem is `YES' or `NO' depending on whether or not the identity $w\bumpeq w'$ holds in $\mathcal{S}$. For a finite semigroup, the identity checking problem is always decidable, and moreover, belongs to the complexity class $\mathsf{coNP}$: if for some pair $(w,w')$ of words that together involve $m$ letters, the identity $w\bumpeq w'$ fails in the semigroup $\mathcal{S}$, then a nondeterministic polynomial algorithm can guess an $m$-tuple of elements in $\mathcal{S}$ witnessing the failure and then confirm the guess by computing the values of the words $w$ and $w'$ at this $m$-tuple. There exist many examples of finite semigroups whose identity checking problem is $\mathsf{coNP}$-complete; see, e.g., \citep{AVG09,HLMS,JM06,Ki04,Kl09,Kl12,PV06,Se05,SS06} and the references therein. However, the task of classifying finite semigroups according to the computational complexity of identity checking appears to be far from being feasible as it is not yet accomplished even in the case of finite groups.

For infinite semigroups, results on the identity checking problem are sparse. The reason for this is that infinite semigroups usually arise in mathematics as \sgps\ of transformations of an infinite set, or \sgps\ of relations on an infinite domain, or \sgps\ of matrices over an infinite ring, and as a rule all these \sgps\ are `too big' to satisfy any nontrivial identity. If, however, an infinite semigroup satisfies a nontrivial identity, its identity checking problem may constitute a challenge: \citet{Mu68} had constructed an infinite semigroup with undecidable identity checking problem. On the `positive' side, we mention a recent result by \citet{DJK18} who have shown that checking identities in the famous bicyclic monoid $\mathcal{B}:=\langle a,b\mid ba=1\rangle$ can be done in polynomial time via rather a non-trivial algorithm based on linear programming.

Observe that even though Theorem~\ref{thm:description} gives an algorithm to verify whether or not a given identity $w\bumpeq w'$ holds in the Kauffman monoid $\mathcal{K}_3$, the algorithm is not polynomial in the number of letters occurring in the words $w$ and $w'$ because one has to check conditions (a)--(c) for every proper subset of the set $\al(w)$. We will `unfold' this algorithm so that the unfolded version admits a polynomial-time implementation; our approach is inspired by a method developed by \citet{SS06} for checking identities in certain finite semigroups.

Given a word $w\in X^+$, its \emph{first} (\emph{last}) \emph{occurrence word} is obtained from $w$ by retaining only the first (respectively, the last) occurrence of each letter that occurs in $w$. A \emph{jump} is a triple $(x,G,y)$, where $x$ and $y$ are (not necessarily distinct) letters and $G$ is a (possibly empty) set of letters that contains neither $x$ nor $y$. The jump $(x,G,y)$ \emph{occurs} in a word $w$ if $w$ can be factorized as $w=v_1xv_2yv_3$ where $v_1,v_2,v_3\in X^*$ and $G=\al(v_2)$. For instance, each  of the jumps $(x,\{y,z\},x)$ and $(y,\varnothing,y)$ occurs twice in the word $xy^2zxzy^2x$, while each of the jumps $(x,\{y\},z)$ and $(z,\{y\},x)$ occurs just once.

The following result is in fact a reformulation of Theorem~\ref{thm:description} in a form amenable for an algorithmic analysis.
\begin{theorem}
\label{thm:jump}
An identity $w\bumpeq w'$ holds in the monoid $\mathcal{K}_3$ if and only if either both $w$ and $w'$ are empty or $w$ and $w'$ have the same first occurrence and the same last occurrence words, and every jump occurs the same number of times in $w$ and $w'$.
\end{theorem}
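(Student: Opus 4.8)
The plan is to derive Theorem~\ref{thm:jump} directly from Theorem~\ref{thm:description} by translating the three conditions that Theorem~\ref{thm:description} quantifies over all proper subsets $Y\subset\al(w)$ into intrinsic properties of the pair $(w,w')$. The empty case is trivial, so I assume $w,w'$ are non-empty; note that having the same first occurrence word already entails $\al(w)=\al(w')$, so that equality is available throughout.

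For conditions (a) and (b) the observation is that the letters of a word make their first appearances in exactly the order recorded by its first occurrence word; consequently, for any $Y$ with $\al(w)\not\subseteq Y$, the first letter of $w_Y$ is the leftmost letter of the first occurrence word of $w$ that does not lie in $Y$. From this, ``condition (a) holds for every proper $Y\subset\al(w)$'' is equivalent to ``$w$ and $w'$ have the same first occurrence word'': the backward implication is immediate, and for the forward one, writing the first occurrence word of $w$ as $x_1x_2\cdots x_k$, one applies (a) in turn to $Y=\varnothing,\{x_1\},\{x_1,x_2\},\dots,\{x_1,\dots,x_{k-1}\}$ and concludes $x_1=x_1',\ x_2=x_2',\dots$ by induction. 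Condition (b) is handled symmetrically, with last occurrence words in place of first ones.

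The crux is condition (c). For (possibly equal) letters $a,b$ and a word $u$ write $|u|_{ab}$ for the number of occurrences of the two-letter word $ab$ in $u$, and for a jump $(a,G,b)$ write $N(a,G,b\,;w)$ for the number of its occurrences in $w$. The key identity is
\[
|w_Y|_{ab}=\sum_{G\subseteq Y}N(a,G,b\,;w)\qquad\text{whenever }a,b\notin Y,
\]
while $|w_Y|_{ab}=0$ if $a\in Y$ or $b\in Y$: an occurrence of $ab$ in $w_Y$ corresponds to a factorization $w=v_1av_2bv_3$ with $a,b\notin Y$ and $\al(v_2)\subseteq Y$, that is, to an occurrence in $w$ of the jump $(a,\al(v_2),b)$ with $\al(v_2)\subseteq Y$ (note $a,b\notin\al(v_2)$ is automatic, since a copy of $a$ or $b$ inside $v_2$ would be deleted, hence would lie in $Y$, contradicting $a,b\notin Y$). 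If all jump counts of $w$ and $w'$ agree, this identity immediately gives condition (c) for every $Y$. Conversely, fix $a,b$; if either is absent from $\al(w)$ all counts in question vanish on both sides, so assume $a,b\in\al(w)$. Then every $G\subseteq\al(w)\setminus\{a,b\}$ gives a \emph{proper} subset $Y:=G$ of $\al(w)$ (proper because $a\in\al(w)\setminus G$), so condition (c) together with the identity yields $\sum_{G'\subseteq G}N(a,G',b\,;w)=\sum_{G'\subseteq G}N(a,G',b\,;w')$ for all such $G$; Möbius inversion over the Boolean lattice of subsets of $\al(w)\setminus\{a,b\}$ then gives $N(a,G,b\,;w)=N(a,G,b\,;w')$ for every $G\subseteq\al(w)\setminus\{a,b\}$, and hence for every $G$ (both counts being $0$ otherwise, as a jump forbids $a,b\in G$ and cannot use letters outside $\al(w)$).

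I expect the argument to be largely routine once the displayed identity is isolated; the one place that genuinely needs care is the converse half of the (c)-step, specifically the verification that every set $G$ occurring in some jump of $w$ does provide a \emph{proper} subset $Y=G$ of $\al(w)$, so that condition (c) of Theorem~\ref{thm:description} is actually applicable to it, followed by the Möbius inversion to disentangle the nested sums. The inductive recovery of the first and last occurrence words from conditions (a) and (b) is comparatively easy.
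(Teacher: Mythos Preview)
Your proposal is correct and follows essentially the same route as the paper: both reduce Theorem~\ref{thm:jump} to Theorem~\ref{thm:description}, recover the first/last occurrence words by iterating conditions (a)/(b) over $Y=\varnothing,\{x_1\},\{x_1,x_2\},\dots$, and handle condition (c) via the identity $|w_Y|_{ab}=\sum_{G\subseteq Y}N(a,G,b\,;w)$ (which the paper isolates as Lemma~\ref{lem:jump}). The only cosmetic difference is that you invoke M\"obius inversion on the Boolean lattice to extract the individual jump counts from their partial sums, whereas the paper unwinds the same computation as an induction on $|G|$; these are equivalent.
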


\begin{proof}
For the `only if' claim, we use the `only if' part of Theorem~\ref{thm:description}. In view of the latter, $\al(w)=\al(w')$, whence $w$ is empty whenever $w'$ is, and vice verse.
So we may assume that $w,w'\in X^+$. Since $w$ and $w'$ satisfy condition (a) of Theorem~\ref{thm:description}, they start with the same letter, say, $x_1$. If $\al(w)=\{x_1\}$, the first occurrence word of both $w$ and $w'$ is just $x_1$, and we are done. Otherwise $\{x_1\}$ is a proper subset of $\al(w)$, and therefore, condition (a) must be satisfied by the words $w_{\{x_1\}}$ and $w'_{\{x_1\}}$. Hence the first letter of $w_{\{x_1\}}$ is the same as the first letter of $w'_{\{x_1\}}$; let us denote this common letter by $x_2$. Observe that $x_2\ne x_1$ since $x_1$ does not occur in $w_{\{x_1\}}$ by the very definition of this word. If $\al(w)=\{x_1,x_2\}$, the first occurrence word of both $w$ and $w'$ is  $x_1x_2$, and we are done again. Otherwise $\{x_1,x_2\}$ is a proper subset of $\al(w)$, and we can repeat the argument until we exhaust the set $\al(w)$. At the $i$-th step of the procedure, we append the common first letter $x_i$ of the words $w_{\{x_1,\dots,x_{i-1}\}}$ and $w'_{\{x_1,\dots,x_{i-1}\}}$ to the already constructed word $x_1\cdots x_{i-1}$;
observe that $x_i\notin\{x_1,\dots,x_{i-1}\}$ by the definition of the word $w_{\{x_1,\dots,x_{i-1}\}}$. Clearly, the word we get at the end of the procedure is the common first occurrence word of $w$ and $w'$. In the dual way, we deduce that $w$ and $w'$ have the same last occurrence word.

It remains to show that an arbitrary jump $(x,G,y)$ occurs the same number of times in $w$ and $w'$. We fix the letters $x$ and $y$ and induct on the cardinality of $G$. If this cardinality is $0$, that is, $G=\varnothing$, each occurrence of the jump $(x,\varnothing,y)$ in a word are nothing but an occurrence of $xy$ in this word. Since $w$ and $w'$ satisfy condition (c) of Theorem~\ref{thm:description}, the word $xy$ must occur the same number of times in $w$ and $w'$, and so does the jump $(x,\varnothing,y)$.

The induction step relies on the following observation, which will be useful also in the proof of the `if' claim.
\begin{lemma}
\label{lem:jump}
Let $x$ and $y$ be (not necessarily distinct) letters, $v\in X^+$ a word, and $G\subseteq\al(v)$ a set of letters that includes neither $x$ nor $y$. The factor $xy$ occurs in the word $v_G$ as many times as jumps of the form $(x,H,y)$, where $H$ runs over the set of all subsets of $G$, occur in the word $v$.
\end{lemma}

\begin{proof}
For $xy$ to occur in $v_G$, the word $v$ should contain factors of the form $xsy$  where $\al(s)\subseteq G$ so that the `streak' $s$ disappears when the letters from $G$ get removed from $v$. In terms of jumps, this means that the occurrences of $xy$ in the word $v_G$ are in a 1-1 correspondence with the occurrences of jumps of the form $(x,H,y)$ with $H\subseteq G$ in the word $v$.
\end{proof}

Now consider a jump $(x,G,y)$ with $G\ne\varnothing$. Of course, we may assume that $x,y\in\al(w)$ and $G\subseteq\al(w)$. Then $G$ is a proper subset of $\al(w)$ since $x\notin G$. Consider the words $u:=w_G$ and $u':=w'_G$. They satisfy condition (c) of Theorem~\ref{thm:description}. Hence, if $m$ and $m'$ denote the numbers of occurrences of the word $xy$ in $u$ and respectively $u'$, we have $m=m'$. For any subset $H\subseteq G$, let $n_H$ and $n'_H$ stand for the numbers of occurrences of the jump $(x,H,y)$ in $w$ and respectively $w'$. By Lemma~\ref{lem:jump} we have
\begin{equation}
\label{eq:jump}
m=\sum_{H\subseteq G}n_H=n_G+ \sum_{H\subset G}n_H\quad\text{and}\quad m'=\sum_{H\subseteq G}n'_H=n'_G+ \sum_{H\subset G}n'_H.
\end{equation}
We have $m=m'$ and, by the induction assumption, $n_H=n'_H$ for each proper subset $H$ of $G$. Hence the equalities \eqref{eq:jump} imply that $n_G$=$n'_G$, as required. This completes the proof of the `only if' claim.

For the `if' claim, consider any words $w$ and $w'$ satisfying the conditions of our theorem. If both $w$ and $w'$ are empty, the identity $w\bumpeq w'$ holds in every monoid. Thus, we may assume that $\al(w)\ne\varnothing$. Take an arbitrary proper subset $G$ of $\al(w)$. We aim to show that the words $u:=w_G$ and $u':=w'_G$ satisfy conditions (a)--(c) of  Theorem~\ref{thm:description}; our claim then follows from the `if' part of the latter theorem.

Let $v$ be the first occurrence word of both $w$ and $w'$. Then it is easy to see that the word $v_G$ is the first occurrence word of both $u$ and $u'$. In particular, the first letter of $v_G$ occurs as the first letter in both $u$ and $u'$. Thus, $u$ and $u'$ satisfy condition (a). In the dual way, we obtain that $u$ and $u'$ satisfy condition (b).

In order to verify condition (c), take an arbitrary word $xy$ of length 2, where $x$ and $y$ are (not necessarily distinct) letters, and let $G\subseteq\al(v)$ be a set of letters that includes neither $x$ nor $y$. Re-using the notation $m,m',n_H,n'_H$ introduced in the last paragraph of the proof of the `only if' claim and applying Lemma~\ref{lem:jump}, we get
\[
m=\sum_{H\subseteq G}n_H\quad\text{and}\quad m'=\sum_{H\subseteq G}n'_H.
\]
Since $n_H=n'_H$ for each $H$, we conclude that $m=m'$, thus completing the proof of the `if' claim.
\end{proof}

It remains to show that, given an identity $w\bumpeq w'$, one can check whether or not the words $w$ and $w'$ satisfy conditions of Theorem~\ref{thm:jump} in polynomial of the sum of the lengths of $w$ and $w'$ time. For this, it suffices to exhibit algorithms that, given a word $v\in X^+$ of length $n$, find its first occurrence word, its last occurrence word, and its jumps with their multiplicities in polynomial in $n$ time. In fact, the first two algorithms require only $O(kn)$ time, where $k$ is the number of letters in $\al(v)$, and the third algorithm requires $O(kn\log(kn))$ time.

The algorithms for constructing the first and last occurrence words are pretty straightforward. For the first occurrence word, we initialize $\overrightarrow{v}$ as the empty word and then scan the input word $v$ letter-by-letter from left to right. Each time when we read a letter of $v$, we check whether the letter occurs in $\overrightarrow{v}$ and if it does not, we append the letter to $\overrightarrow{v}$. Then we pass to the next letter if it exists or stop if the current letter is the last letter of~$v$. Clearly, at the end of the process, $\overrightarrow{v}$ becomes the first occurrence word of~$v$. The algorithm, which we call FOW, makes $n$ steps and on each step it operates with the word $\overrightarrow{v}$ whose length does not exceed $k$. Hence, the time spent by FOW is linear in $kn$.

For the last occurrence word, we could apply FOW to the mirror image of the input and return the mirror image of the output of FOW. Alternatively, we suggest the following algorithm, which like FOW operates in the online manner, that is, processes its input word $v$ letter-by-letter from left to right. We initialize $\overleftarrow{v}$ as the empty word. Each time as a letter of $v$ is read, we check whether the letter occurs in $\overleftarrow{v}$. If it does, it occurs in $\overleftarrow{v}$ exactly once and we remove the occurrence from  $\overleftarrow{v}$. Then we append the current letter to $\overleftarrow{v}$ and pass to the next letter if it exists or stop if we have reached the last letter of $v$. At the end of the process, $\overleftarrow{v}$ becomes the last occurrence word of $v$, and again, the working time of the algorithm is linear in $kn$.

The algorithm that constructs the multiset of all jumps of $v$ is slightly more involved. We initialize $J$ as the empty multiset; besides that, for each letter $x\in\al(v)$, we introduce an integer variable denoted $\lop(x)$ (the \emph{last observed position of} $x$) and initialise it as 0. For each positive integer $i\le n$, we denote by $v[i]$ the letter in the $i$-th position of the input word $v$. For integers $i,j\le n$, we let
\[
v[i,j]:=\begin{cases}
v[i]\cdots v[j] &\text{if }\ i\le j,\\
\text{the empty word} & \text{if }\ i>j.
\end{cases}
\]

Our algorithm scans $v$ letter-by-letter from left to right. Suppose that the current position is $i$ and $v[i]=y$. For each letter $x\in\al(v)$ such that $\lop(x)>0$, we check if $\lop(y)\le\lop(x)$. If the inequality holds, then neither $x$ nor $y$ occurs in the factor $v[\lop(x)+1,i-1]$ of $v$ and we add the jump $(x,G,y)$ with $G:= \al(v[\lop(x)+1,i-1])$ to the mulitiset~$J$. (Recall that adding an element $e$ to a multiset $M$ means including $e$ in $M$ with multiplicity 1 if $e$ has not yet appeared in $M$ or increasing the multiplicity of $e$ in $M$ by 1 if $e$ has already appeared in $M$. By storing $M$ as an appropriate data structure, say, a self-balancing binary search tree, one can perform each such operation in $O(\log|M|)$ time. See \citet{St15} for a description of advanced techniques for handling multisets.) Then we update the variable $\lop(y)$ by assigning value $i$ to it and either stop if $i=n$ or pass to the position $i+1$ if $i<n$. Thus, the algorithm makes $n$ steps, at each step at most $k$ jumps are added to $J$, and the time needed for adding of each jump is bounded by $O(\log(kn))$. Hence the overall time spent is $O(kn\log(kn))$.

The following table demonstrates how the algorithm runs on the word $v=x^3yxyz^4xyz$. We have lowered the entries in the columns containing the values of the variables $\lop(x)$, $\lop(y)$, and $\lop(z)$ in order to stress that every step of the algorithm consists of two phases. Namely, when processing the letter $v[i]$, we first add jumps to the multiset  $J$ using the values of $\lop(x)$, $\lop(y)$, and $\lop(z)$ inherited from the previous step, and only after that we update one of these values.
\[
\begin{array}{c|c|c|c|c|c}
  i & v[i] & \lop(x) & \lop(y) & \lop(z) & \text{Jumps added to $J$}\\
  \hline
  1 & x & \raisebox{6pt}{0} & \raisebox{6pt}{0} & \raisebox{6pt}{0} & -\rule{0pt}{16pt} \\
  2 & x & \raisebox{6pt}{1} & \raisebox{6pt}{0} & \raisebox{6pt}{0} & (x, \varnothing, x) \\
  3 & x & \raisebox{6pt}{2} & \raisebox{6pt}{0} & \raisebox{6pt}{0} & (x, \varnothing, x) \\
  4 & y & \raisebox{6pt}{3} & \raisebox{6pt}{0} & \raisebox{6pt}{0} & (x, \varnothing, y) \\
  5 & x & \raisebox{6pt}{3} & \raisebox{6pt}{4} & \raisebox{6pt}{0} & (x, \{y\}, x),\, (y, \varnothing, x)\\
  6 & y & \raisebox{6pt}{5} & \raisebox{6pt}{4} & \raisebox{6pt}{0} & (y, \{x\}, y),\, (x, \varnothing, y) \\
  7 & z & \raisebox{6pt}{5} & \raisebox{6pt}{6} & \raisebox{6pt}{0} & (x, \{y\}, z),\, (y, \varnothing, z) \\
  8 & z & \raisebox{6pt}{5} & \raisebox{6pt}{6} & \raisebox{6pt}{7} & (z, \varnothing, z) \\
  9 & z & \raisebox{6pt}{5} & \raisebox{6pt}{6} & \raisebox{6pt}{8} & (z, \varnothing, z) \\
  10 & z & \raisebox{6pt}{5} & \raisebox{6pt}{6} & \raisebox{6pt}{9} & (z, \varnothing, z) \\
  11 & x & \raisebox{6pt}{5} & \raisebox{6pt}{6} & \raisebox{6pt}{10} & (x, \{y, z\}, x),\, (y, \{z\}, x),\, (z, \varnothing, x)\\
  12 & y & \raisebox{6pt}{11} & \raisebox{6pt}{6} & \raisebox{6pt}{10} & (y, \{z, x\}, y),\, (z, \{x\}, y),\, (x, \varnothing, y)\\
  13 & z & \raisebox{6pt}{11} & \raisebox{6pt}{12} & \raisebox{6pt}{10} & (z, \{x, y\}, z),\, (x, \{y\}, z),\, (y, \varnothing, z)\\
     &   & \raisebox{6pt}{11} & \raisebox{6pt}{12} & \raisebox{6pt}{13} &
\end{array}
\]

As the referee observed, the final values of the variables $\lop(x)$ record the order of last occurrence of the corresponding letters in the word $v$; for instance, the final row in the above example immediately tells us that the last occurrence word of $x^3yxyz^4xyz$ is $xyz$. Therefore, in order to verify that the last occurrence words of two given words $w$ and $w'$ are equal, it suffices to verify that when the above algorithm is applied to $w$ and $w'$, the final values of the variables $\lop(x)$ are the same. Similarly, the first nonzero values of the variables $\lop(x)$ record the order of first occurrence of the corresponding letters in $v$; in the above example, these values for $x$, $y$, and $z$ are respectively 1, 4, and 7, whence the first occurrence word is also $xyz$. Thus, in order to check that the first occurrence words of $w$ and $w'$ are equal, it suffices to check that the first non-zero values of the variables $\lop(x)$ appear in the same order. These observations show that the earlier separate algorithms for computing the first and the last occurrence words are in fact redundant. Nevertheless, we have retained them as they are conceptually very simple and have lower complexity.

\section{Conclusion}
\label{sec:applications}

\subsection{Future work} Obviously, the next natural step in studying identities of Kauffman monoids is to characterize the identities of $\mathcal{K}_n$ for $n>3$. Recently, two of the present authors (see~\citep{KV19}) have found a description of the identities of $\mathcal{K}_4$. It turns out that $\mathcal{K}_4$ satisfies precisely the same identities $\mathcal{K}_3$, which is a sort of surprise. The proof of this result is quite involved and relies on a geometric representation of Kauffman monoids rather than their presentation via generators and relations.

One can ask whether or not the coincidence of the identities of $\mathcal{K}_3$ and $\mathcal{K}_4$ extends further, say, to the identities of the monoid $\mathcal{K}_5$. The answer is negative: for instance, the identity $x^2yx\bumpeq xyx^2$, which holds in $\mathcal{K}_3$ (and hence, in $\mathcal{K}_4$) by Theorem~\ref{thm:description}, does not hold in $\mathcal{K}_5$, as the next proposition shows.
\begin{proposition}
\label{prop:K5}
If a homomorphism $\varphi\colon X^*\to\mathcal{K}_5$ extends the map \[\begin{cases}x\mapsto h_1h_2h_3\\ y\mapsto h_4\end{cases},\] then $(x^2yx)\varphi\ne(xyx^2)\varphi$.
\end{proposition}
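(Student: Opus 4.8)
The plan is to compute both sides directly in the monoid presentation of $\mathcal{K}_5$, reduce each of $(x^2yx)\varphi$ and $(xyx^2)\varphi$ to Jones's normal form over $\{c,h_1,h_2,h_3,h_4\}$, and then invoke the uniqueness part of Lemma~\ref{lem:jones}. Write $g:=x\varphi=h_1h_2h_3$ and note first that $g$ is already in Jones's normal form. The key preliminary simplification is that of $g^2=h_1h_2h_3h_1h_2h_3$: since $|3-1|\ge2$ the generators $h_1$ and $h_3$ commute by \eqref{eq:TL1}, so $g^2=h_1h_2h_1\cdot h_3h_2h_3$, and the two instances of \eqref{eq:TL2} ($h_1h_2h_1=h_1$ and $h_3h_2h_3=h_3$) collapse this to $h_1h_3$. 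Thus $x^2\varphi=h_1h_3$.

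Next I would evaluate the two words of the proposition. For $(xyx^2)\varphi=h_1h_2h_3\cdot h_4\cdot h_1h_3$, push the penultimate letter $h_1$ leftwards past $h_4$ and then past $h_3$ (each commutation legitimate by \eqref{eq:TL1}); this yields $h_1h_2h_1\cdot h_3h_4h_3$, which by \eqref{eq:TL2} equals $h_1h_3$. For $(x^2yx)\varphi=h_1h_3\cdot h_4\cdot h_1h_2h_3$, push the middle $h_1$ leftwards past $h_4$ and $h_3$, obtaining $h_1h_1\cdot h_3h_4h_2h_3$; applying $h_1^2=ch_1$ from \eqref{eq:TL4} and then swapping the commuting pair $h_4h_2$ turns this into $c\,h_1h_3h_2h_4h_3$. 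Writing $h_1=h_{[1,1]}$, $h_3h_2=h_{[3,2]}$ and $h_4h_3=h_{[4,3]}$ exhibits this as $c\,h_{[1,1]}h_{[3,2]}h_{[4,3]}$, whose lower subscripts $1<2<3$ and upper subscripts $1<3<4$ are strictly increasing — that is, it is in Jones's normal form. Since $h_1h_3=h_{[1,1]}h_{[3,3]}$ is likewise in normal form and the two normal forms are patently different (one involves $c$, $h_2$ and $h_4$, the other does not), Lemma~\ref{lem:jones} forces $(x^2yx)\varphi\neq(xyx^2)\varphi$.

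Every step above is a single application of one of the defining relations \eqref{eq:TL1}, \eqref{eq:TL2}, \eqref{eq:TL4}, so there is no real computational difficulty; the one point deserving explicit mention is the claim that $c\,h_1h_3h_2h_4h_3$ cannot be reduced further. I expect this to be the only place a reader might pause, and it is dispatched at once by observing that the word already has precisely the shape $c^{\ell}h_{[b_1,a_1]}h_{[b_2,a_2]}h_{[b_3,a_3]}$ with $a_1<a_2<a_3$ and $b_1<b_2<b_3$ prescribed by Jones's normal form, so uniqueness in Lemma~\ref{lem:jones} guarantees it represents a genuinely distinct element of $\mathcal{K}_5$ from $h_1h_3$.
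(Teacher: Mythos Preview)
Your proof is correct and follows essentially the same route as the paper's own argument: both compute $g^2=h_1h_3$, reduce $(xyx^2)\varphi$ to $h_1h_3$ and $(x^2yx)\varphi$ to $ch_1h_3h_2h_4h_3$ via the relations \eqref{eq:TL1}, \eqref{eq:TL2}, \eqref{eq:TL4}, and then appeal to the uniqueness of Jones's normal form (Lemma~\ref{lem:jones}). Your write-up is, if anything, slightly more explicit about the intermediate commutations and about why $ch_1h_3h_2h_4h_3$ is already in normal form.
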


\begin{proof}
First observe that
\begin{align*}
(h_1h_2h_3)^2=h_1h_2h_3h_1h_2h_3&=h_1h_2h_1h_3h_2h_3 &&\text{by~\eqref{eq:TL1}}\\
                                &=h_1h_3 &&\text{by~\eqref{eq:TL2}}.
\end{align*}
Therefore,
\begin{align*}
(x^2yx)\varphi=(h_1h_2h_3)^2h_4h_1h_2h_3&=h_1h_3h_4h_1h_2h_3 &&\text{as $(h_1h_2h_3)^2=h_1h_3$}\\
                               &=h_1^2h_3h_2h_4h_3&&\text{by~\eqref{eq:TL1}}\\
                               &=ch_1h_3h_2h_4h_3&&\text{by~\eqref{eq:TL4}},
\end{align*}
while
\begin{align*}
(xyx^2)\varphi=h_1h_2h_3h_4(h_1h_2h_3)^2&=h_1h_2h_3h_4h_1h_3 &&\text{as $(h_1h_2h_3)^2=h_1h_3$}\\
                               &=h_1h_2h_1h_3h_4h_3&&\text{by~\eqref{eq:TL1}}\\
                               &=h_1h_3&&\text{by~\eqref{eq:TL2}}.
\end{align*}
Since the words $ch_1h_3h_2h_4h_3=ch_{[1]}h_{[3,2]}h_{[4,3]}$ and $h_1h_3=h_{[1]}h_{[3]}$ are in Jones's normal form, Lemma~\ref{lem:jones} implies that they represent different elements of $\mathcal{K}_5$.
\end{proof}

At the moment, we possess no characterization of the identities of the monoid $\mathcal{K}_n$ for any $n>4$.

\subsection{Clustering phenomenon} Here we discuss an unexpected phenomenon revealed by the studies of identities of `interesting' semigroups: it turns out that semigroups coming from different parts of mathematics and having seemingly different nature tend to cluster with respect to their identities. For instance, comparing Theorem~\ref{thm:description} with the results by \citet{SV17}, we observe that the Kauffman monoid $\mathcal{K}_3$ shares the identities with the monoid $\mathcal{S}_2^1$, where $\mathcal{S}_2=\langle e,f \mid e^2=e,\ f^2=f\rangle$ is the free product of two trivial semigroups. Recall that in the proof of the `only if' part of Theorem~\ref{thm:description}, we exhibited an embedding of the semigroup $\mathcal{S}_2$ into $\mathcal{K}_3$. Clearly, this embedding extends to an isomorphism between the monoid $\mathcal{S}_2^1$ and a certain submonoid of the monoid $\mathcal{K}_3$, and therefore, every identity of the latter holds in $\mathcal{S}_2^1$. However, the fact that every identity of the submonoid isomorphic to $\mathcal{S}_2^1$ must hold in the whole monoid $\mathcal{K}_3$ appears to be somewhat amazing. Comparing Theorem~\ref{thm:description} with the results by \citet{KR79}, one can also observe that the monoid $\mathcal{K}_3$ satisfies the same identities as the least monoid containing the semigroup of adjacency patterns of words that was introduced and studied in \citep{KR79}.

Yet another interesting example has been found by \citet{DJK18}: the bicyclic monoid $\mathcal{B}:=\langle a,b\mid ba=1\rangle$ shares the identities with the monoid $UT_2(\mathbb{T})$ of all upper triangular $2\times 2$-matrices over the tropical semiring\footnote{Recall that the tropical semiring $\mathbb{T}$ is formed by the real numbers augmented with the symbol $-\infty$ under the operations $a\oplus b:=\max\{a,b\}$ and $a\otimes b:=a+b$, for which $-\infty$ plays the role of zero: $a\oplus-\infty=-\infty\oplus a=a$ and $a\otimes-\infty=-\infty\otimes a=-\infty$. A square matrix over $\mathbb{T}$ is said to be upper triangular if its entries below the main diagonal all $-\infty$.}. Similarly to the situation discussed in the preceding paragraph, $\mathcal{B}$ can be shown to be isomorphic to a submonoid of $UT_2(\mathbb{T})$, cf. \citep[Corollary 4.2]{IM10}, whence every identity of the latter holds in $\mathcal{B}$.  Again, it was unexpected that every identity of the submonoid isomorphic to $\mathcal{B}$ extends to the whole monoid $UT_2(\mathbb{T})$. \citet{Sh15} has provided a family of further interesting examples of semigroups which satisfy the same identities as the bicyclic monoid.

We mention in passing that the same clustering phenomenon occurs in the realm of finite monoids. For quite a representative example, the reader may compare the results of the papers \citep{AVZ15,JF,Vo04}. Each of these papers  studies identities of certain finite monoids that belong to several natural series parameterized by positive integers: Straubing monoids, Catalan monoids, Kiselman monoids, gossip monoids, etc. These monoids (whose definitions we do not reproduce here) arise in the literature due to completely unrelated reasons and consist of elements of very different nature. Nevertheless, it turns out that the $n$-th monoids in each of the series satisfy the same identities!

\bigskip

\small

\noindent\textbf{Acknowledgements.} The authors are very much indebted to the anonymous referee for her/his valuable remarks, especially, for the observation that the algorithm computing the multiset of jumps can also be used to compute the first and the last occurrence words.

Yuzhu Chen, Xun Hu, Yanfeng Luo have been partially supported by the Natural Science Foundation of China (projects no.\ 10971086, 11371177). M. V. Volkov acknowledges support from the Ministry of Science and Higher Education of the Russian Federation, project no.\  1.580.2016, the Competitiveness Program of Ural Federal University, and from the Russian Foundation for Basic Research, project no.\ 17-01-00551.

\end{document}